\begin{document}
\newtheorem{theorem}{Theorem}     
\newtheorem{conjecture}{Conjecture}
\newtheorem{question}{Question}
\newtheorem{proposition}[theorem]{Proposition}
\newtheorem{lemma}[theorem]{Lemma}
\newtheorem{corollary}[theorem]{Corollary}
\newtheorem{example}[theorem]{Example}
\newtheorem{remark}[theorem]{Remark}
\newtheorem{definition}[theorem]{Definition}
\setlength{\parskip}{.15in} 
\setlength{\parindent}{0in} 
\title[\hfil  Non-definite Sturm-Liouville problems]{On non-definite Sturm-Liouville problems with two turning points\\ } 
\author[Mervis Kikonko and Angelo B. Mingarelli \hfilneg]{Mervis Kikonko and Angelo B. Mingarelli}   
\address{School of Mathematics and Statistics\\ 
Carleton University, Ottawa, Ontario, Canada, K1S\, 5B6}
\email[Mervis Kikonko]{mervis.kikonko@gmail.com}
\email[Angelo B. Mingarelli]{amingare@math.carleton.ca}
\date{February 26, 2013}
\thanks{This research is partially supported by an NSERC Canada Discovery Grant to the second author.}
\subjclass[2010]{34C10, 34B25}
\keywords{Sturm-Liouville, non-definite, indefinite, Dirichlet problem, Richardson number, turning points}
\begin{abstract}
\noindent{This is an inaugural study of the Dirichlet problem associated with a regular non-definite Sturm-Liouville equation in the case of two turning points. We give a priori lower bounds on the Richardson numbers associated with this problem thereby complementing pioneering results by Atkinson and Jabon \cite{Jabon84} in the one turning point case.}
\end{abstract}

\maketitle

\section{Introduction}

The general weighted regular Sturm-Liouville problem consists in finding the eigenvalues $\lambda \in \mathbb{C}$ of a  second order linear differential equation with real coefficients of the form
\begin{equation}
\label{eq2}
{y}^{\prime\prime}\left(x\right)+\left(\lambda  w\left(x\right)+q(x)\right)y\left(x\right)=0
\end{equation}
when a (separated homogeneous) boundary condition is imposed at the ends of a finite interval $[a, b]$, {\it i.e., }
\begin{equation}
\label{eq34}
y(a)\cos \,\alpha\,-\,y^{\prime}(a)\sin\,\alpha=0,
\end{equation}
\begin{equation}
\label{eq44}
y(b)\cos\,\beta\,+\,y^{\prime}(b)\sin\,\beta=0,
\end{equation}
where $0\leq \alpha,\beta\,<\,\pi.$ We assume throughout that $w, q$ are piecewise continuous though this is not, strictly speaking, necessary as mere Lebesgue integrability suffices for the general results stated here. In this paper we will always be considering the Dirichlet problem on $[a,b]$. In other words the boundary conditions are of fixed-end type, or
\begin{equation}
\label{eq3}
y(a)=0,
\end{equation}
\begin{equation}
\label{eq4}
y(b)=0.
\end{equation}

Generally, we let $D\,=\,\{y:[a,b]\,\rightarrow\,\mathbb{C}\,|\, y, py^{\prime}\,\in\,AC[a,b], w^{-1}\{(-y^{\prime})^{\prime}+qy\}\,\in\,L^2(a,b), y \;\text{satisfies}  \eqref{eq34}-\eqref{eq44}\}$, in the case $|w(x)|>0$ a.e. on (a,b). Then associated with the problem \eqref{eq2},\eqref{eq34}-\eqref{eq44} are the quadratic forms $L$ and $R$, with domain $D$ where, for $y\,\in\,D$,
\begin{equation}
\label{eq5}
(Ly,y)= |y(a)|^2\cot\, \alpha\,+\,|y(b)|^2\cot\,\beta\,+\,\int\limits_a^b\{|y^{\prime}|^2\,-\,q(x)|y|^2\}\, dx
\end{equation}
and
\begin{equation}
\label{eq6}
(Ry,y)=\int\limits_a^b w(x)|y|^2\, dx.
\end{equation}
Here $(,)$ denotes the usual $L^2$-inner product. (Moreover we note, as is usual, that the $\cot\,\alpha$ (resp. $\cot\,\beta$) term in \eqref{eq5} is absent if $\alpha=0$ (resp. $\beta=0$) in \eqref{eq34}-\eqref{eq44}).

About 100 years ago Otto Haupt \cite{OH}, Roland Richardson, \cite{RGDR} (and possibly  Hilbert) noted that the nature of the spectrum of the general boundary problem \eqref{eq2}-\eqref{eq44} is dependent upon some ``definiteness" conditions on the forms $L$ and $R$. Thus, Hilbert and his school termed the problem \eqref{eq2}-\eqref{eq44} {\bf polar} if the form $(Ly,y)$ is definite on $D$, {\it i.e.,} either $(Ly,y)>0$ for each $y\neq 0$ in $D$ or $(Ly,y)<0$ for each $y\neq 0$ in $D$ (modern terminology refers to this case as the {\bf left-definite} case). Note that there are no sign restrictions on $R$ here, {\it i.e.,} on $w(x)$ in \eqref{eq2} so long as it does not vanish a.e. on $[a,b]$.

The problem \eqref{eq2}-\eqref{eq44} was called {\bf orthogonal} (or {\bf right-definite} these days) if $R$ is definite on $D$, (see above), whereas the general problem, {\it i.e.,} when neither $L$ nor $R$ is definite on $D$, was dubbed \textbf{non-definite} by Richardson (see \cite{RGDR}, p. 285). In this respect, see also Haupt (\cite{OH2}, p.91).
We retain Richardson's terminology ``non-definite" in the sequel, in relation to the ``general" Sturm-Liouville boundary problem \eqref{eq2}-\eqref{eq44}. Thus, in the non-definite case, there exists functions $y, z \in D$ for which $(Ly,y)>0$ and $(Lz,z)<0$ and also, for a possibly different set of $y, z$, $(Ry,y)<0$ and $(Rz,z)>0$. This can also be thought of as the most general case of \eqref{eq2}-\eqref{eq44}, that is, the case of generally unsigned coefficients, $w, q$ in \eqref{eq2}. For more information on such problems we refer to the survey \cite{abm86} (also available as a preprint, see \cite{abm86}).

In this report we give a critical analysis of the important paper by Atkinson and Jabon \cite{Jabon84} for the Dirichlet problem \eqref{eq3}-\eqref{eq4} that led to the first major study of examples of non-definite Sturm-Liouville problems having non-real eigenvalues. This phenomenon was postulated by Richardson \cite{RGDR} in 1918, with the first actual example found by Mingarelli, \cite{Mingarelli83}. We correct some typographical and other errors in \cite{Jabon84} and present different proofs of the results therein with an eye at the two turning point case (a {\bf turning point} is a point around which $w$ {\it changes its sign}).  Because of the difficulty of the calculations associated with this problem we assume that the coefficients $w, q$ in \eqref{eq2} are generally piecewise continuous as in \cite{Jabon84}. As result of our general propositions, we find a priori estimates of the {\bf Richardson numbers}, defined below, of a Sturm-Liouville problem with fixed end boundary conditions in the non-definite case where the weight function is piecewise constant and has two turning points.

Ultimately, our aim is to examine the behavior of the eigenfunctions, both real and non-real, of this  non-definite Sturm-Liouville problem. As we mentioned above one of the features of the non-definite problem is the possible existence of non-real eigenvalues. 

\section{Preliminaries}
We summarize here a few basic results about the theory of non-definite problems. (Exact references may be found in \cite{abm86} and will not be needed here.) In what follows we always assume that the problem under consideration is {\it non-definite}.

First comes the general form of the {\it Haupt-Richardson oscillation theorem} in this case which generalizes Sturm's oscillation theorem for the real eigenfunctions to this setting.

\begin{theorem} There exists an integer $n_R\geq0$ such that for each $n\geq n_R$ there are \textbf{at least} two solutions of \eqref{eq2}, \eqref{eq3}-\eqref{eq4} having exactly $n$ zeros in $(a,b)$ while for $n< n_R$ there are \textbf{no real solutions} having $n$ zeros in $(a,b)$. Furthermore, there exists a possibly different integer $n_H\geq n_R$ such that for each $n\geq n_H$  there are precisely two solutions of \eqref{eq2}, \eqref{eq3}-\eqref{eq4} having exactly $n$ zeros in $(a,b)$. 
\end{theorem}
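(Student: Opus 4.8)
The plan is to use the modified Pr\"ufer transformation to reduce the counting of zeros to the study of a single real-analytic function of $\lambda$. Writing a real solution of \eqref{eq2} with $y(a)=0$ in the form $y=r\sin\theta$, $y'=r\cos\theta$, the Pr\"ufer angle satisfies $\theta'=\cos^2\theta+(\lambda w+q)\sin^2\theta$ with the normalization $\theta(a,\lambda)=0$. At every point where $\sin\theta=0$ one has $\theta'=1>0$, so $\theta$ can cross each integer multiple of $\pi$ only once and always upward; consequently a solution realizing $g(\lambda):=\theta(b,\lambda)=(n+1)\pi$ has exactly $n$ zeros in $(a,b)$, and $\lambda$ is a Dirichlet eigenvalue precisely when $g(\lambda)\in\pi\mathbb{Z}_{>0}$. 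Since the solutions of \eqref{eq2} are entire in $\lambda$, the function $g$ is continuous (indeed real-analytic) on $\mathbb{R}$. I would first record these reductions carefully, noting that Dirichlet eigenvalues are geometrically simple, so that ``two solutions with $n$ zeros'' means two values of $\lambda$ with $g(\lambda)=(n+1)\pi$.

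The next step exploits non-definiteness. Because $w$ changes sign, both $\{w>0\}$ and $\{w<0\}$ have positive measure, and on whichever set $\lambda w>0$ the equation oscillates with a frequency growing like $\sqrt{|\lambda|}$; a Sturm comparison therefore gives $g(\lambda)\to+\infty$ as $\lambda\to+\infty$ and, crucially, also as $\lambda\to-\infty$. Being continuous with $g\to+\infty$ at both ends, $g$ attains a finite global minimum $\theta^\ast$ at some $\lambda^\ast$. I would define $n_R$ to be the least number of zeros occurring among the real eigenfunctions; then for $n<n_R$ there are by definition no real solutions with $n$ zeros. For any $n$ with $(n+1)\pi>\theta^\ast$, applying the intermediate value theorem separately on $(-\infty,\lambda^\ast]$ and on $[\lambda^\ast,\infty)$ --- on each of which $g$ runs from $+\infty$ down to $\theta^\ast$ --- produces at least one eigenvalue on each side, hence at least two solutions with $n$ zeros; this covers all $n\ge n_R$ except possibly the degenerate boundary case in which $\theta^\ast$ is itself a multiple of $\pi$ attained at a single point, which I would treat separately.

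For $n_H$ the goal is to upgrade ``at least two'' to ``exactly two'' for all large $n$, and for this I would establish that $g$ is eventually monotone in each direction. Differentiating the Pr\"ufer equation in $\lambda$ and solving the resulting linear equation with an integrating factor gives the representation
\begin{equation*}
\frac{\partial\theta}{\partial\lambda}(b,\lambda)=\frac{1}{r(b,\lambda)^2}\int_a^b w(x)\,y(x,\lambda)^2\,dx,
\end{equation*}
so the sign of $g'(\lambda)$ is exactly the sign of $\int_a^b w\,y^2$. A WKB/oscillation analysis shows that for $|\lambda|$ large the mass of $y^2$ concentrates, up to a factor $|\lambda w|^{-1/2}$, on the oscillatory set $\{\lambda w+q>0\}$, which for $\lambda\to+\infty$ is essentially $\{w>0\}$ and for $\lambda\to-\infty$ essentially $\{w<0\}$; hence $\int_a^b w\,y^2>0$ for $\lambda\ge\lambda_+$ and $<0$ for $\lambda\le\lambda_-$, so $g$ is strictly increasing on $[\lambda_+,\infty)$ and strictly decreasing on $(-\infty,\lambda_-]$. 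On the compact interval $[\lambda_-,\lambda_+]$ the continuous function $g$ is bounded by some $M$; choosing $n_H\ge n_R$ to be the least integer with $(n_H+1)\pi>M$, any line $\theta=(n+1)\pi$ with $n\ge n_H$ misses the graph over $[\lambda_-,\lambda_+]$ and meets each strictly monotone tail exactly once, giving precisely two solutions with $n$ zeros.

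The main obstacle is the asymptotic sign analysis of $\int_a^b w\,y^2$ underlying the eventual monotonicity of $g$: controlling the solution uniformly across the turning points, where the WKB approximation degenerates, is the technically delicate heart of the argument and is precisely where the sign changes of $w$ must be handled with care. A secondary, more elementary difficulty is the tangential boundary case at $n=n_R$ noted above, where a single real eigenvalue of neutral type ($\int_a^b w\,y^2=0$) can occur; I would either rule it out under a genericity hypothesis or absorb it by adjusting the definition of $n_R$.
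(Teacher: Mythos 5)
The paper does not actually prove this theorem: it is quoted in Section 2 as classical background (the Haupt--Richardson oscillation theorem), with the reader referred to the survey \cite{abm86}, so there is no in-paper argument to compare yours against. That said, your Pr\"ufer-angle proof is essentially the standard route to this result, and its skeleton is sound: the reduction of zero-counting to the level sets $g(\lambda)=(n+1)\pi$ of the terminal angle, the limits $g(\lambda)\to+\infty$ as $\lambda\to\pm\infty$ forced by the sign change of $w$, the two-sided intermediate-value argument for ``at least two,'' and eventual monotonicity of $g$ via $\operatorname{sgn}g'(\lambda)=\operatorname{sgn}\int_a^b w\,y^2\,dx$ for ``exactly two.'' You have also correctly isolated the two places where real work remains, and I would press you on both. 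First, the asymptotic sign of $\int_a^b w\,y^2\,dx$ for large $|\lambda|$ is asserted but not proved; note that the paper itself states this fact in Section 2 without proof (``Under very general conditions on the coefficients\dots''), and it is the content of Atkinson--Mingarelli \cite{fva2} --- your WKB sketch is the one step a referee would require you to expand, since uniform control across the turning points is exactly where it can fail without further hypotheses on $w$. Second, the tangency case $(n_R+1)\pi=\theta^{\ast}$ attained at a single $\lambda^{\ast}$ should not be dismissed as non-generic: it genuinely occurs, it is precisely the situation $\int_a^b w\,y^2\,dx=0$ in which the real eigenvalue is algebraically non-simple (the subject of \cite{Mingarelli83}, cited in this paper), and the statement ``at least two solutions'' survives there only if solutions are counted with multiplicity. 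Rather than invoking genericity or redefining $n_R$, you should adopt that multiplicity convention explicitly; with that and the asymptotic sign lemma supplied, your argument is a complete proof.
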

The positive integer $n_R$ is called the \textbf{Richardson index} while the integer $n_H$ is the \textbf{Haupt Index} of the problem \eqref{eq2}, \eqref{eq3}-\eqref{eq4} for historical reasons. 

Under very general conditions on the coefficients $w, q$ one can show that 
$${\int_a^b w(x)|y(x,\lambda)|^2\, dx>0}$$ (resp. $<0$) for all sufficiently large $\lambda >0$ (resp. $\lambda < 0$), and so the oscillation of the real eigenfunctions is ``Sturmian" for all large enough eigenvalues (i.e., the oscillation numbers go up by one as we one moves from one eigenvalue to the next).

\section{Estimating the Richardson Numbers}

A number of classical results should be recalled in connection with the general problem here. First, for a fixed initial condition at $x=a$, (say $y(a)=0$ and $y^{\prime}(a)=1$) the solution $y(x, \lambda)$ of \eqref{eq2},\eqref{eq3}-\eqref{eq4} is an entire function of $\lambda \in \mathbb{C}$, \cite{abm84}. In addition, the zeros $x(\lambda)$ of such a solution are continuous and indeed differentiable as a function of $\lambda$. In fact, use of the differential equation \eqref{eq2},\eqref{eq3}-\eqref{eq4} along with the implicit function theorem applied to the relation $y(x(\lambda), \lambda)=0$ gives us that
$$\frac{\partial{x}}{\partial{\lambda}} =  - \frac{1}{y^{\prime}(a, \lambda)^2+ y^{\prime}(b, \lambda)^2} \cdot \int_a^b w(x)|y(x,\lambda)|^2\, dx.$$
It follows from this that a zero $x(\lambda)$ of a real eigenvalue-eigenfunction pair $\lambda$, $u(x, \lambda)$ of \eqref{eq2},\eqref{eq3}-\eqref{eq4} moves to the left (or right) as $\lambda$ increases according to whether 
$$\int_{a}^b |u(x,\lambda)|^2\, w(x)\,dx > 0 \quad\quad (resp. < 0)$$
at the eigenvalue in question. The smallest such value of $\lambda$, denoted by $\lambda^+$, is what we call the {\bf Richardson number} (or index) of the problem \eqref{eq2},\eqref{eq3}-\eqref{eq4}, with a similar definition for the negative eigenvalues. Specifically,
\begin{eqnarray}\label{ri1}
\lambda^+ &=& \inf \{\rho \in \mathbb{R} : \forall \lambda > \rho, \int_{a}^b |u(x,\lambda)|^2\, w(x)\,dx > 0\}\\
\label{ri2}
\lambda^- &=& \sup \{\rho \in \mathbb{R} : \forall \lambda <\rho, \int_{a}^b |u(x,\lambda)|^2\, w(x)\,dx < 0\}.
\end{eqnarray}

Now consider the equation \eqref{eq2} where $q\left(x\right)=q_0\; \in \mathbb{R}$ is identically constant for all $x\in [a, b]$ and $w(x)$ is a step-function with two turning points inside $[a, b]$. We can now assume, without loss of generality, that the finite interval $[a, b]$ is the interval $[-1,2]$ since we can always transform $[a, b]$ into $[-1,2]$ via the linear change of independent variable $$x \to \frac{3x - (b +2a)}{b - a}.$$ The weight function $w$ is now a piecewise constant step-function described by the relations
\[
w(x) = \left\{
\begin{array}{l l}
  A, &\quad\text{if $x\in$[-1,0]},\\
  B, &\quad\text{if $x\in$(0,1]},\\
  C, &\quad\text{if $x\in$(1,2]},\\
\end{array} \right.
\]
with $A<0, B>0, C<0$ without loss of generality (observe that the case where $A>0, B<0, C>0$ reduces to the previous one upon replacing $\lambda$ by $-\lambda$ and $w$ by $-w$ in \eqref{eq2}.) Thus, the boundary conditions \eqref{eq3}-\eqref{eq4} now become simply,
\begin{equation}
\label{conditions}
u\left(-1\right) = 0 = u\left(2\right).
\end{equation}
This said, a simple application of Sturm's comparison theorem shows that   for large values of $\lambda >0$ the zeros of $u(x,\lambda)$ must accumulate in the intervals $[-1,0]$ and $[1,2]$. We recall that the estimation of these quantities, $\lambda^+, \lambda^-$, for the one-turning point case  of a piecewise constant weight function was considered earlier in  \cite{Jabon84}. 

Now we revisit the proof of Proposition 2 in \cite{Jabon84} (where a piecewise constant weight function with one turning point is assumed) and provide a corrected version since there is a serious typographical error there.  Thus, consider the problem
\begin{equation}
\label{jabon1}
- y^{\prime\prime}+q(x)y=\lambda w(x)y , \quad \quad y(-1)=y(1)=0,
\end{equation}
\[
w(x) = \left\{
\begin{array}{l l}
   1, &\quad\text{if $x\geq0$}\\
  -1, &\quad\text{if $x<0$}\\
\end{array} \right.
\]
as in \cite{Jabon84}. It is shown there that if the constant function $q(x) = q_0<\frac{-\pi^2}{4}$ in  \eqref{jabon1}, then $\lambda^+\leq |q_0|-\frac{\pi^2}{4}$. In order to prove  this fact we require two lemmas from \cite{Jabon84}. We shall state and reprove them with the necessary corrections.
\begin{lemma}
\label{jabon2}
Let $y$ be a solution of $y^{\prime\prime}=-\mu y$ with $\mu \in \mathbb{R}$, \text{over}\,\, $[-1,1]$.
\begin{enumerate}
\item If  $\mu < \frac{\pi^2}{4},\,\, y(0)=0,\,\,$ and $y'(x)>0 \;in\;[0,1],$ then
    $$\int\limits_0^1 y^2(x)dx<\frac{1}{2}y^2(1).$$
\item If $\mu < \frac{\pi^2}{4},\,\, y(-1)=0,\,\,$ and $y'(x)>0 \;in\;[-1,0],$ then
    $$\int\limits_{-1}^0 y^2(x)dx<\frac{1}{2}y^2(0).$$
\end{enumerate}
\end{lemma}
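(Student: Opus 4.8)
The plan is to reduce the inequality to an elementary calculus fact by exploiting the explicit form of the solution together with the conserved ``energy'' of the equation $y''=-\mu y$. First I would note that the condition $y(0)=0$ forces $y(x)=c\,s_\mu(x)$, where $s_\mu$ is the solution with $s_\mu(0)=0$ and $s_\mu'(0)=1$; since the asserted inequality is homogeneous of degree two in $y$, the normalization $c$ is immaterial. The hypothesis $y'(x)>0$ on $[0,1]$ then fixes the sign and, together with $\mu<\pi^2/4$, places us in exactly one of three regimes: $y(x)=c\sin(\sqrt{\mu}\,x)$ with $0<\sqrt{\mu}<\pi/2$ when $\mu>0$; $y(x)=cx$ when $\mu=0$; and $y(x)=c\sinh(\sqrt{-\mu}\,x)$ when $\mu<0$.

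The key step is a single closed-form identity valid for $\mu\neq 0$. Differentiating shows that $(y')^2+\mu y^2$ is constant, and evaluating at $x=0$ gives $(y'(x))^2+\mu y(x)^2=y'(0)^2$. Combining this with the integration-by-parts relation $\mu\int_0^1 y^2 = -y'(1)y(1)+\int_0^1 (y')^2$ (which uses $y(0)=0$), and then eliminating both $\int_0^1(y')^2$ and $y'(0)^2$, yields
$$\frac{1}{2}y(1)^2-\int_0^1 y^2(x)\,dx \;=\; \frac{y'(1)\,\bigl(y(1)-y'(1)\bigr)}{2\mu}.$$
Since $y'(1)>0$, the desired strict inequality is therefore equivalent to the sign condition $\operatorname{sign}\bigl(y(1)-y'(1)\bigr)=\operatorname{sign}(\mu)$.

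It then remains to verify this sign condition in each regime, which I expect to be the only place requiring care. For $\mu>0$ it reduces to $\sin\omega>\omega\cos\omega$ with $\omega=\sqrt{\mu}\in(0,\pi/2)$, i.e. to the standard inequality $\tan\omega>\omega$; for $\mu<0$ it reduces to $\sinh\nu<\nu\cosh\nu$ with $\nu=\sqrt{-\mu}$, i.e. to $\tanh\nu<\nu$; and the degenerate case $\mu=0$ is immediate since $\int_0^1 (cx)^2\,dx=c^2/3<c^2/2=\tfrac12 y(1)^2$. The main obstacle is thus essentially bookkeeping: keeping the direction of the inequality consistent across the sign of $\mu$, and noting that the threshold $\mu<\pi^2/4$ is precisely what guarantees $y'>0$ throughout $[0,1]$ in the oscillatory case, so that the three cases are exhaustive and the hypotheses are consistent.

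Finally, part (2) requires no new work: the substitution $x\mapsto -x$ carries a solution of $y''=-\mu y$ on $[-1,0]$ with $y(-1)=0$ and $y'>0$ into a solution on $[0,1]$ satisfying the hypotheses of part (1), while transforming $\int_{-1}^0 y^2$ and $\tfrac12 y^2(0)$ into the corresponding quantities of part (1), so the conclusion transfers verbatim.
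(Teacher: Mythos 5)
Your argument is correct in substance but takes a genuinely different route from the paper. The paper splits part (1) into two cases: for $\mu\le 0$ it uses convexity of $y$ (since $y''=|\mu|y\ge 0$ and $y\ge 0$) to get $0\le y(x)\le x\,y(1)$ and hence $\int_0^1 y^2\le \tfrac13 y^2(1)$, and for $0<\mu<\pi^2/4$ it writes $y=C\sin kx$, computes $\int_0^1\sin^2 kx\,dx$ explicitly, and invokes $\sin 2k/(2k)>\cos^2 k$ --- which is exactly your $\tan\omega>\omega$ in disguise. Your unified identity
$\tfrac12 y(1)^2-\int_0^1 y^2\,dx=\frac{y'(1)\bigl(y(1)-y'(1)\bigr)}{2\mu}$
(obtained from energy conservation plus integration by parts) checks out and is arguably cleaner: it treats $\mu>0$ and $\mu<0$ symmetrically, reduces everything to $\tan\omega>\omega$ and $\tanh\nu<\nu$, and makes transparent exactly where strictness and the sign of $\mu$ enter; the paper's convexity argument, on the other hand, buys the sharper constant $\tfrac13$ in the non-oscillatory case and avoids any division by $\mu$. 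One slip to fix: in part (2) the reflection $x\mapsto -x$ does \emph{not} carry the hypotheses of part (2) into those of part (1) --- it sends $y(-1)=0$, $y'>0$ on $[-1,0]$ to $z(1)=0$, $z'<0$ on $[0,1]$. The correct reduction is the translation $z(t)=y(t-1)$, which gives $z(0)=0$, $z'>0$ on $[0,1]$, $\int_0^1 z^2=\int_{-1}^0 y^2$ and $z(1)=y(0)$, after which part (1) applies verbatim (the paper instead reruns the argument with $y=B\sin k(x+1)$, which amounts to the same translation).
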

\begin{proof}
\begin{enumerate}
\item  There are two cases, here: Either $\mu \leq 0$ or $0 < \mu < \pi^2/4$. We first consider the case $\mu \leq0$. Since $y$ is increasing on $[0,1]$ and $y(0)=0$, this means that $y^{\prime\prime}=|\mu| y\, \geq 0$ in [0,1]. That is, $y$ is a convex function and by the theory of convex functions, if $g(x)$ is any line segment joining any two points on the curve $y(x)$, then $g(x)\geq y(x)$. If we let the two points be $(0,y(0))$ and $(1,y(1))$ then,
    \begin{equation*}
    g(x)=xy(1),\text{and so},\;0\leq y(x)\leq xy(1)\;\text{for all}\;x\,\in\,[0,1]
    \end{equation*}
    Thus,
    $$\int\limits_0^1 y^2(x)dx\leq \int\limits_0^1 x^2y^2(1)dx=\frac{1}{3}y^2(1)<\frac{1}{2}y^2(1)$$
    as required.
    Next, we consider the case $0<\mu<\frac{\pi^2}{4}$. Then, necessarily,
    $$y(x)=C\sin kx,\;C>0,\;0<k<\frac{\pi}{2}\ \text{where}\ k^2=\mu.$$
    We then have,
    $$\int\limits_0^1 y^2(x)\,dx=C^2\int\limits_0^1 \sin^2 kx\,dx=\frac{C^2}{2}(1-\frac{\sin2k}{2k}).$$
    But for $0 < k < \pi/2$ we know that ${\sin2k}/(2k)>\cos^2 k$. So,
    $$\frac{C^2}{2}(1-\frac{\sin2k}{2k}) <\frac{C^2}{2}(1-\cos^2 k)=\frac{C^2}{2}\sin^2k=\frac{y^2(1)}{2}.$$
    Therefore,
    $$\int\limits_0^1 y^2(x)dx<\frac{y^2(1)}{2}.$$
\item For the case $\mu \leq0$, following the arguments in the proof of case (1), we get
    $$0<y(x)\leq (x+1)y(0)\;\text{for all}\;x\,\in\,[-1,0],$$ which yields
    $$\int\limits_{-1}^0 y^2(x)\,dx<\frac{y^2(0)}{2}.$$
    Finally, if $0<\mu<\frac{\pi^2}{4}$, as in (1), we can take $y(x)=B\sin k(x+1),\;B>0$ and $k=\sqrt{\mu}$ is defined above. Following the arguments in the proof of case (1), we see that
    $$\int\limits_{-1}^0 y^2(x)dx<\frac{1}{2}y^2(0).$$
\end{enumerate}
\end{proof}
\begin{lemma}
\label{jabon3}
Let $y$ be a solution of $y^{\prime\prime}=-\mu y$ with $\mu \in \mathbb{R}$, \text{over}\,\, $[-1,1]$.
\begin{enumerate}
\item If $\mu >0, \;y(1)=0,$ and $y(0)y'(0)> 0,$ then
     $$\int\limits_0^1 y^2(x)dx>\frac{1}{2}y^2(0).$$
\item If $\mu >0, \;y(-1)=0$ and $y(0)y'(0)< 0,$ then
     $$\int\limits_{-1}^0 y^2(x)dx>\frac{1}{2}y^2(0).$$
\end{enumerate}
\end{lemma}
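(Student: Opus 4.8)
The plan is to solve the constant-coefficient equation explicitly and reduce both assertions to a single elementary trigonometric inequality, the sign of whose terms is entirely controlled by the hypothesis on $y(0)y'(0)$. Throughout I set $k=\sqrt{\mu}>0$, which is legitimate since $\mu>0$.

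For part (1) I would use the boundary condition $y(1)=0$ to write the (one-dimensional) solution space as $y(x)=C\sin\!\big(k(1-x)\big)$ for some real $C\neq 0$; the value $C=0$ is excluded because it would force $y(0)y'(0)=0$, contradicting the strict hypothesis. A direct substitution $u=1-x$ then gives
$$\int_0^1 y^2(x)\,dx=\frac{C^2}{2}\Big(1-\frac{\sin 2k}{2k}\Big),\qquad y^2(0)=C^2\sin^2 k,$$
so that, after using $\sin^2 k=(1-\cos 2k)/2$, the desired inequality $\int_0^1 y^2>\tfrac12 y^2(0)$ becomes equivalent to
$$\cos^2 k>\frac{\sin 2k}{2k}.$$
This is precisely the \emph{reverse} of the inequality $\sin 2k/(2k)>\cos^2 k$ that was invoked in the proof of Lemma \ref{jabon2} for $0<k<\pi/2$, and the whole point is that the present hypothesis moves us into the complementary regime.

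The crucial step, and really the only one requiring thought, is to see that the sign hypothesis forces $\sin 2k<0$. Computing $y(0)=C\sin k$ and $y'(0)=-Ck\cos k$, I obtain $y(0)y'(0)=-\tfrac12 C^2 k\,\sin 2k$; since $C^2 k>0$, the assumption $y(0)y'(0)>0$ is equivalent to $\sin 2k<0$. But then $\sin 2k/(2k)<0\le \cos^2 k$, with strict inequality because $\sin 2k<0$ is strict, and the reduced inequality holds at once. Thus, in contrast to Lemma \ref{jabon2}, no delicate comparison of $\sin 2k/(2k)$ against $\cos^2 k$ is actually needed here, since the right-hand side is simply negative while the left is nonnegative.

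For part (2) I would run the mirror-image argument: the condition $y(-1)=0$ gives $y(x)=C\sin\!\big(k(x+1)\big)$, the same substitution yields the identical pair of formulas for $\int_{-1}^0 y^2$ and $y^2(0)$, and now $y(0)y'(0)=\tfrac12 C^2 k\,\sin 2k$, so the hypothesis $y(0)y'(0)<0$ again delivers $\sin 2k<0$ and the same reduced inequality $\cos^2 k>\sin 2k/(2k)$ closes the argument. The main obstacle, such as it is, is purely the sign bookkeeping in translating the condition on $y(0)y'(0)$ into a condition on $\sin 2k$; once that translation is made, the estimate is essentially free.
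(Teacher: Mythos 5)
Your proof is correct and follows essentially the same route as the paper's: solve the constant-coefficient equation explicitly as a sine centered at the boundary zero, translate the hypothesis $y(0)y'(0)>0$ (resp. $<0$) into $\sin 2k<0$, and evaluate the integral as $\tfrac{C^2}{2}\bigl(1-\tfrac{\sin 2k}{2k}\bigr)$. Your final step (the reduced inequality $\cos^2 k>\sin 2k/(2k)$ with a negative right-hand side) is just a rearrangement of the paper's chain $\tfrac{A^2}{2}\bigl(1-\tfrac{\sin 2k}{2k}\bigr)>\tfrac{A^2}{2}\ge\tfrac{A^2}{2}\sin^2 k$.
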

\begin{proof}
\begin{enumerate}
\item As before, we can write $y(x)=A\sin k(x-1),$ with $A\neq 0$, where $k=\sqrt{\mu}$. The condition $y(0)y'(0)>0$ implies that $\sin2k< 0$.
    So,
    \begin{eqnarray*}
    \int\limits_0^1 y^2(x)dx=A^2\int\limits_0^1 \sin^2k(x-1) dx &=&\frac{A^2}{2}(1-\frac{\sin2k}{2k})\\
    &>& \frac{A^2}{2},\quad (\text{since}\;\sin2k<0)\\
    &\geq& \frac{A^2}{2}\sin^2k=\frac{y^2(0)}{2}.\\
    \end{eqnarray*}
\item We use an argument similar to the above, except that here $y(x)=A\sin{ k(x+1)}$. Now the condition $y(0)y'(0)< 0$ implies that $k\sin 2k<0$ once again. A simple calculation as in the proof of case (1) here gives the result.
\end{enumerate}
\end{proof}
\begin{remark}\label{rem3} If $\mu >0, \;y(1)=0,$ and $y(0)y'(0)\geq 0,$ then
     $\displaystyle \int\limits_0^1 y^2(x)dx\geq \frac{1}{2}y^2(0).$
\end{remark}
We give a modified proof of the next proposition as well since there are a few misprints and errors in \cite{Jabon84} that carry throughout the arguments there.
\begin{proposition}
\emph{(Proposition 2 in \cite{Jabon84})} \label{jabon4} \\
If in \eqref{jabon1}, $q_0<-\frac{\pi^2}{4}$, then $\lambda^+\leq |q_0|-\frac{\pi^2}{4}$.
\end{proposition}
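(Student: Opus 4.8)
The plan is to reduce the claim to a comparison of two weighted integrals at each (positive) eigenvalue lying above the threshold $|q_0|-\pi^2/4$, and then to invoke the two preceding lemmas. Fix such an eigenvalue $\lambda$ and let $u$ be a corresponding real eigenfunction, normalized (after multiplying by $-1$ if necessary) so that $u'(-1)>0$. By the definition \eqref{ri1} of $\lambda^+$, it suffices to prove that
$$\int_{-1}^{1} w(x)\,u^2(x)\,dx \;=\; \int_0^1 u^2\,dx \;-\; \int_{-1}^0 u^2\,dx \;>\;0$$
for every such $\lambda$. On $[-1,0]$ the weight is $-1$, so \eqref{jabon1} reads $u''=-\mu_1 u$ with $\mu_1 = |q_0|-\lambda$, while on $[0,1]$ it reads $u''=-\mu_2 u$ with $\mu_2 = \lambda+|q_0|>0$. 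The hypothesis $\lambda>|q_0|-\pi^2/4$ is precisely the statement $\mu_1<\pi^2/4$, which is exactly the bound demanded by Lemma \ref{jabon2}.

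Next I would establish the sign conditions required by the lemmas. Since $u(-1)=0$ and $u'(-1)>0$, I would examine $u$ on $[-1,0]$ by cases on the sign of $\mu_1$: if $\mu_1\le 0$ the equation forces $u$ to be (weakly) convex and increasing with $u>0$ on $(-1,0]$, while if $0<\mu_1<\pi^2/4$ then $u(x)=C\sin(\sqrt{\mu_1}\,(x+1))$ with $C>0$, whose argument stays in $(0,\pi/2)$ over $[-1,0]$, so again $u>0$ and $u'>0$ there. In every case $u'>0$ on $[-1,0]$ and, by continuity of $u$ and $u'$ across the turning point $x=0$, we obtain $u(0)>0$ and $u'(0)>0$, hence $u(0)u'(0)>0$.

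With these facts in hand the two lemmas close the argument. Applying Lemma \ref{jabon2}(2) with $\mu=\mu_1<\pi^2/4$ gives $\int_{-1}^0 u^2\,dx<\tfrac12 u^2(0)$, and applying Lemma \ref{jabon3}(1) with $\mu=\mu_2>0$ (using $u(1)=0$ from the right boundary condition and $u(0)u'(0)>0$) gives $\int_0^1 u^2\,dx>\tfrac12 u^2(0)$. Subtracting yields $\int_{-1}^1 w\,u^2\,dx>\tfrac12 u^2(0)-\tfrac12 u^2(0)=0$, which is the desired inequality; since it holds at every eigenvalue above $|q_0|-\pi^2/4$, the infimum in \eqref{ri1} gives $\lambda^+\le |q_0|-\pi^2/4$. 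The main obstacle is the middle step: one must verify the positivity of $u$ and $u'$ on the whole of $[-1,0]$ (and thus $u(0)u'(0)>0$) uniformly across the two regimes $\mu_1\le0$ and $0<\mu_1<\pi^2/4$, and correctly match the solution at the turning point, since it is exactly this sign information — and not any monotonicity on $[0,1]$, where $u$ may oscillate — that renders the two lemmas applicable.
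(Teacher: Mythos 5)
Your proposal is correct and follows essentially the same route as the paper: split $\int_{-1}^{1}wu^2$ at the turning point $x=0$, bound $\int_{-1}^{0}u^2$ above by $\tfrac12 u^2(0)$ via Lemma~\ref{jabon2}(2) and $\int_{0}^{1}u^2$ below by $\tfrac12 u^2(0)$ via Lemma~\ref{jabon3}(1), and conclude from the definition \eqref{ri1}. The only (favourable) differences are cosmetic: your two-regime case analysis for $u'>0$ on $[-1,0]$ is actually more careful than the paper's (which asserts $y''>0$ there, valid only when $\mu\le 0$), and by securing the strict inequality $u(0)u'(0)>0$ you can cite Lemma~\ref{jabon3}(1) directly where the paper falls back on the weak form in Remark~\ref{rem3}.
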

\begin{proof}
Let $y$ satisfy \eqref{jabon1}, i.e, $y$ is an eigenfunction with eigenvalue $\lambda$ of the boundary problem
\begin{eqnarray}\label{eq12}
y^{\prime\prime}+(\lambda -q_0)y &=& 0,\quad 0\leq x\leq 1,\\
\label{eq13}
y^{\prime\prime}+(-\lambda-q_0)y&=&0,\quad -1\leq x<0,\\
y(-1)&=&y(1)=0. \nonumber
\end{eqnarray}
Without loss of generality we may assume that $y'(-1)=1$. Let $\lambda>|q_0|-\frac{\pi^2}{4}$. We will show that for such $\lambda$ we must have $$\int_{-1}^2 |y(x,\lambda)|^2\, w(x)\, dx > 0,$$ from which the stated result follows from previous considerations. Now, by assumption, $\lambda>- q_0-\frac{\pi^2}{4}$. So, $\mu = -\lambda-q_0<\frac{\pi^2}{4}$. Observe that $y^{\prime\prime}+\mu y=0$ on $(-1,0)$ and $\mu < \pi^2/4$. An application of Sturm's comparison theorem now implies that $y(x)\neq 0$ in $(-1,0)$ and so $y^{\prime\prime}(x) > 0$ there as well. Hence $y^{\prime}$ is increasing and since it is positive at $x=-1$, then $y^{\prime}(x) > 0$. So, we can apply the second part of Lemma ~\ref{jabon2} to the interval $(-1,0)$, to find that (rewriting $y(x,\lambda) \equiv y(x)$ for simplicity)
\begin{equation*}
\int\limits_{-1}^0 y^2(x)dx<\frac{1}{2}y^2(0).
\end{equation*}
Next, since by hypothesis we have $q_0 < -\pi^2/4$, it follows that $|q_0| \geq - q_0 > \pi^2/4.$ So $\lambda$ defined at the outset must be positive. Hence, $\lambda - q_0 >0$. In addition, it is easy to see that $k = \sqrt{\lambda - q_0} >\pi/2$. Since $\lambda-q_0>0$ and since $y$ is increasing on $(-1,0)$, it follows that $y(0)y'(0)\geq 0$. Hence, we can apply Remark~\ref{rem3} over $(0,1)$ to get
\begin{equation*}
\int\limits_0^1 y^2(x)dx\geq \frac{1}{2}y^2(0).
\end{equation*}
Now, combining the above results we obtain,
\begin{eqnarray*}
\int\limits_{-1}^1 w(x)\, y^2(x)\, \, dx &=&\int\limits_{-1}^1 sgn (x) y^2(x)\, \, dx \\
 &=&-\int\limits_{-1}^0y^2(x)dx+\int\limits_0^1y^2(x)dx\\
&>& -\frac{1}{2}y^2(0)+\frac{1}{2}y^2(0)=0.
\end{eqnarray*}
Thus,
\begin{equation*}
\int\limits_{-1}^1 y^2(x)\, sgn(x)\, dx >0,\;\text{for all}\;\lambda>|q_0|-\frac{\pi^2}{4}.
\end{equation*}
The definition of the Richardson number in \eqref{ri1} now yields the estimate
\begin{equation*}
\lambda^+\leq |q_0|-\frac{\pi^2}{4}.
\end{equation*}
\end{proof}
\begin{remark} By symmetry, one can show that
\begin{equation*}
\lambda^-\geq -|q_0|+\frac{\pi^2}{4}.
\end{equation*}
Neither bound on the Richardson numbers $\lambda^+, \lambda^-$ is precise.
\end{remark}

In \cite{Jabon84}, another approach to the problem of finding $\lambda^+$ is the following. The conditions on the weight-function $w$ and potential $q$ are quite general as mere Lebesgue integrability is assumed by the authors. In addition, we provide a different proof than the one sketched in \cite{Jabon84}.
\begin{proposition}
\label{AJM}
\emph{(Proposition 3 in \cite{Jabon84})}\\
Let $\lambda$ be a real eigenvalue and $y(x,\lambda)$ be a corresponding real eigenfunction of
\begin{equation}
\label{propo3}
y^{\prime\prime}+(\lambda w+q)y=0,\;\;y(a)=y(b)=0
\end{equation}
with zeros
$$a=x_0<x_1<x_2<\dots<x_k=b.$$
For each $j=0,1, \ldots, k-1,$ let there be a number $\mu_j < \lambda$ and a real valued function $u_j$ satisfying the differential equation
$$u_j^{\prime\prime}+(\mu_j w+q)u_j=0, \quad x\in [x_j,x_{j+1}],$$
with the additional property that $u_j(x)>0$ in $[x_j,x_{j+1}].$ Then $\lambda^+ < \lambda$, i.e., the Richardson number $\lambda^+$ for \eqref{propo3} is less than the given eigenvalue $\lambda$.
\end{proposition}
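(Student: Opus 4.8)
The plan is to establish the single inequality $\int_a^b w(x)\,y^2(x,\lambda)\,dx>0$; once this is known, the definition \eqref{ri1} of the Richardson number together with the formula for $\partial x/\partial\lambda$ recorded above (whose sign is governed by exactly this integral) yields $\lambda^+<\lambda$. To prove the integral inequality I would argue nodal interval by nodal interval, comparing the eigenfunction $y$ (at the parameter $\lambda$) against the positive comparison solution $u_j$ (at the strictly smaller parameter $\mu_j$) by means of a Picone-type identity.

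On a fixed subinterval $[x_j,x_{j+1}]$ both $y$ and $u_j$ solve an equation of the form $v''+(\sigma w+q)v=0$, with $\sigma=\lambda$ and $\sigma=\mu_j$ respectively, and $u_j>0$ throughout. A direct differentiation gives the Picone identity
\begin{equation*}
\frac{d}{dx}\left[\frac{y}{u_j}\left(u_j y'-y u_j'\right)\right]=(\mu_j-\lambda)\,w\,y^2+\left(y'-\frac{u_j'}{u_j}\,y\right)^2 .
\end{equation*}
Integrating over $[x_j,x_{j+1}]$ and using $y(x_j)=y(x_{j+1})=0$ together with the positivity of $u_j$ (which keeps the ratio $y/u_j$ bounded at the endpoints), the left-hand side contributes nothing, so
\begin{equation*}
(\lambda-\mu_j)\int_{x_j}^{x_{j+1}} w\,y^2\,dx=\int_{x_j}^{x_{j+1}}\left(y'-\frac{u_j'}{u_j}\,y\right)^2\,dx\ge 0 .
\end{equation*}

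Because $\mu_j<\lambda$, the factor $\lambda-\mu_j$ is positive, whence $\int_{x_j}^{x_{j+1}} w\,y^2\,dx\ge 0$. Equality would force $(y/u_j)'\equiv 0$ on the subinterval, i.e. $y=c\,u_j$ for a constant $c$; but $y(x_j)=0$ while $u_j(x_j)>0$ forces $c=0$, contradicting that $y$ is a nontrivial eigenfunction. Hence the inequality is strict on each $[x_j,x_{j+1}]$, and summing over $j=0,\dots,k-1$ gives $\int_a^b w\,y^2\,dx>0$, as desired.

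The step I expect to be most delicate is the passage from the strict positivity of the single integral $\int_a^b w\,y^2\,dx$ at the eigenvalue $\lambda$ to the \emph{strict} inequality $\lambda^+<\lambda$, since \eqref{ri1} quantifies over all parameters exceeding a threshold rather than over the one value $\lambda$. The point is to exploit the strictness $\mu_j<\lambda$: since each $u_j$ stays positive and each $\mu_j$ stays below $\lambda$ under a small perturbation, the identity above continues to furnish positivity on a whole parameter interval terminating at $\lambda$, not merely at $\lambda$ itself. Combined with the monotone leftward motion of the zeros $x(\lambda)$ encoded in the formula for $\partial x/\partial\lambda$, this places $\lambda$ strictly inside the Sturmian regime and yields $\lambda^+<\lambda$. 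Making this continuity argument fully rigorous—so that the threshold in \eqref{ri1} is seen to lie strictly below $\lambda$—is where the care is required.
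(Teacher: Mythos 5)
Your argument is essentially the paper's: the Picone identity you differentiate is exactly what results from the paper's substitution $z_j=y/u_j$ (which turns the problem on each nodal interval into a left-definite one with leading coefficient $u_j^2$ and weight $wu_j^2$) followed by multiplication by $z_j$ and integration by parts, and both routes yield $(\lambda-\mu_j)\int_{x_j}^{x_{j+1}}wy^2\,dx=\int_{x_j}^{x_{j+1}}u_j^2\bigl((y/u_j)'\bigr)^2\,dx>0$ on each nodal interval before summing. Your explicit strictness argument (equality would force $y=cu_j$, impossible since $y(x_j)=0<u_j(x_j)$) and your closing remarks on passing from positivity at the single eigenvalue $\lambda$ to the inequality $\lambda^+<\lambda$ are, if anything, more careful than the paper, which simply invokes the definition of $\lambda^+$ at that point.
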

\begin{proof}
Fix $j$, $0 \leq j \leq k-1$, and consider the function $z_j={y}/{u_j}$ on $[x_j,x_{j+1}].$  A long but straightforward calculation shows that $z_j$ satisfies a differential equation of the form
\begin{equation*}
-(P_j(x) z_j')'+ Q(x)z_j = (\lambda -\mu_j)\,W_j(x) z_j, \quad\quad z_j(x_j) = z_j(x_{j+1}) =0,
\end{equation*}
which is another Sturm-Liouville equation with leading term $P_j={u_j}^2 > 0$, potential term $Q=0$ and weight function $W_j = w {u_j}^2$. This new boundary problem is a {\it polar} or {\it left-definite} problem [see Chapter 10.7, \cite{Ince26}], \cite{abm86} and so, for each $j$, the quantity $\nu_j = \lambda - \mu_j$ is an eigenvalue of the above Dirichlet problem over $[x_j,x_{j+1}]$. A simple integration by parts now shows that, for each $j$, the eigenvalues $\nu_j$ and the eigenfunctions $z_j$ necessarily satisfy $$\nu_j\cdot \int_{x_j}^{x_{j+1}} z_j^2\, W_j\, dx > 0.$$ Observe that, by hypothesis, the numbers $\nu_j > 0.$ Since $W_j(x) = w(x) {u_j}(x)^2$ the eigenfunctions $z_j$ corresponding to these $\nu_j$ must satisfy
\begin{equation*}
\int\limits_{x_j}^{x_{j+1}} w{u_j}^2\,{z_j}^2\,dx>0,
\end{equation*}
that is,
\begin{equation*}
\int\limits_{x_j}^{x_{j+1}} w\, y^2\,dx > 0,
\end{equation*}
by definition of the $z_j$ above. Since this holds for every $j$ we find that \begin{equation*}
\sum\limits_{j=0}^{k-1}\int\limits_{x_j}^{x_{j+1}} w\,y^2\,dx=\int\limits_{a}^b w\,y^2\,dx>0.
\end{equation*}
The definition of the Richardson number now implies that $\lambda^+<\lambda$.
\end{proof}
\noindent {\bf Remark:} If for each $j$ the $\lambda < \mu_j$ then we can conclude that $\lambda^- > \lambda$ (the proof is similar with few changes and so is omitted.)

\noindent Specializing to the case where for some $c,d\;\in\;(a,b)$, $w(x)>0$ in $(c,d)$, we have the following proposition:

\begin{proposition}
\label{prop4}
Let $w, q$ be piecewise continuous and $w(x) \not\equiv 0$ on any subinterval of $[a,b]$. Assume further that $w(x) > 0$ only in $(c,d)\subset (a,b)$. For some $\mu \in \mathbb{R}$, let
\begin{equation}
\label{mu}
u^{\prime\prime}+(\mu w+q)u=0
\end{equation}
have a positive solution in $[a,e]$ where $e\;\in\;(d,b).$ In addition, for some $\lambda^{*}>\mu,$ let there be a solution $y$ not identically zero of
$$y^{\prime\prime}+(\lambda^* w+q)y=0,$$
 with $y(a)=0$ and having a zero in $(c,d)$. Then $\lambda^{+}<\lambda^{*}.$
\end{proposition}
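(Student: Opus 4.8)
The plan is to adapt the quotient construction from the proof of Proposition~\ref{AJM} to a single subinterval, using the given positive solution $u$ of \eqref{mu} as comparison function. First I note that $y'(a)\neq 0$, for otherwise $y(a)=y'(a)=0$ would force $y\equiv 0$, against hypothesis; hence the first zero $x_1$ of $y$ in $(a,b)$ satisfies $x_1>a$. Since $y$ has a zero $\xi\in(c,d)$ we get $a<x_1\le\xi<d<e$, so that $[a,x_1]\subset[a,e]$. On this interval $u$ is strictly positive, so in particular $u(a)>0$ and $u(x_1)>0$, and the quotient $z=y/u$ is a well-defined $C^1$ function on $[a,x_1]$ with $z(a)=z(x_1)=0$.

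The heart of the argument then follows the computation in Proposition~\ref{AJM}. Writing $f=\lambda^* w+q$ and $g=\mu w+q$, one checks directly from $y=zu$ that $z$ solves the left-definite equation
\[
-(u^2 z')' = (\lambda^*-\mu)\,w\,u^2\,z,\qquad z(a)=z(x_1)=0,
\]
with weight $W=w\,u^2$ and spectral parameter $\nu=\lambda^*-\mu$. Multiplying by $z$ and integrating by parts over $[a,x_1]$ annihilates the boundary term and gives $\int_a^{x_1}u^2(z')^2\,dx=(\lambda^*-\mu)\int_a^{x_1}w\,y^2\,dx$. The left-hand side is strictly positive because $y\not\equiv 0$, and $\lambda^*-\mu>0$ by hypothesis; therefore $\int_a^{x_1}w\,y^2\,dx>0$.

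It remains to convert this local positivity into the bound $\lambda^+<\lambda^*$. By the formula for $\partial x/\partial\lambda$ recorded earlier, $\int_a^{x_1}w\,y^2\,dx>0$ means that the first zero $x_1(\lambda)$ of the solution $y_\lambda$ satisfying $y_\lambda(a)=0$ moves strictly leftward at $\lambda=\lambda^*$; and since the same $u$ remains a valid comparison solution for every $\lambda\ge\lambda^*$ (one still has $\lambda-\mu>0$, while the leftward drift keeps $x_1(\lambda)<d<e$), this positivity—and the leftward motion—persists for all $\lambda\ge\lambda^*$. I expect the real obstacle to lie precisely here: the Richardson number \eqref{ri1} is governed by the \emph{global} integral $\int_a^b w\,y_\lambda^2\,dx$, whereas the construction controls only the piece $\int_a^{x_1}w\,y_\lambda^2\,dx$. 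To close this gap I would apply the same quotient construction, with the same $u$, to each successive interval $[x_j,x_{j+1}]$ between consecutive zeros of $y_\lambda$ that lies inside $[a,e]$—each such piece contributing a positive integral exactly as above—and handle the terminal interval running out to $b$ by invoking disconjugacy of \eqref{eq2} on $[d,b]$, where $w<0$ renders $\lambda w+q<0$ for large $\lambda$ and hence furnishes a positive comparison solution there too. Summing the pieces would yield $\int_a^b w\,y_\lambda^2\,dx>0$ for all $\lambda\ge\lambda^*$, whence \eqref{ri1} gives $\lambda^+<\lambda^*$.
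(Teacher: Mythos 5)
Your proposal follows essentially the same route as the paper: its proof of this proposition simply invokes Proposition~\ref{AJM}, taking the given positive solution $u$ of \eqref{mu} as the comparison function on the nodal intervals of $y$ contained in $[a,e]$ (the hypothesized zero in $(c,d)$ guaranteeing the first such interval lies there) and a disconjugate comparison equation on the remaining part of $[a,b]$ where $w<0$ --- exactly the decomposition you describe, with your explicit quotient computation on $[a,x_1]$ being precisely the content of Proposition~\ref{AJM} restricted to one nodal interval. Your added discussion of the drift of $x_1(\lambda)$ only makes explicit a persistence point that the paper leaves implicit when it passes from positivity of $\int_a^b w\,y^2\,dx$ to the bound on $\lambda^+$ via \eqref{ri1}.
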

\begin{proof}
Let $\lambda^{*}>\mu,$ and let $y$ be the solution of $y^{\prime\prime}+(\lambda^*w+q)y=0$ satisfying $y(a)=0$, where we can always assume that $y'(a)=1$, without loss of generality. For those zeros $x_1< x_2< \ldots$ of $y(x)$ that are in $(a,e)$ we can use the given value of $\mu$ to guarantee that there is a single solution $u$ of \eqref{mu} that is positive in $[x_j, x_{j+1}]$ (since it must be positive in $(a, e)$). This leaves the zeros of $y(x)$ in $(e,b)$. But $w(x) < 0$ on $(e,b)$. Thus, we can always find a value of $\mu >0$ such that \eqref{mu} is disconjugate on $[e,b]$. We now apply Proposition~\ref{AJM} above to conclude that  $\lambda^{+}<\lambda^{*}$.
\end{proof}
Another such application follows.
\begin{proposition}\label{prop5}
Let $w, q$ be as in Proposition~~\ref{prop4} where in addition, for some generally positive $\mu$,  there holds
\begin{eqnarray}
\label{eq31}
\mu w+q &\leq& 0,\;in\;(a,c),\\
\label{eq32}
\mu w+q &\geq& 0,\;in\;(c,d),\\
\label{eq33}
\mu w+q &\leq& 0,\;in\;(d,e),\\
\label{eq34x}
(d-c)\sup_{(c,d)} \sqrt{\mu w+q}&\leq& \frac{\pi}{2},\\
\label{eq35}
(d-c)\inf_{(c,d)}\sqrt{\lambda^{*}w+q}&>\pi.
\end{eqnarray}
Then $\lambda^+ < \lambda^*.$
\end{proposition}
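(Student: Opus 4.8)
\emph{Overview.} The plan is to deduce the conclusion $\lambda^+<\lambda^*$ directly from Proposition~\ref{prop4} by showing that the quantitative hypotheses (\ref{eq31})--(\ref{eq35}) force the two structural assumptions of Proposition~\ref{prop4} to hold for the very $\mu$ and $\lambda^*$ supplied here. Concretely, I must produce (I) a solution of $u''+(\mu w+q)u=0$ that is strictly positive on all of $[a,e]$, and (II) a solution $y$ of $y''+(\lambda^* w+q)y=0$ with $y(a)=0$ possessing a zero inside $(c,d)$. Once both are established, Proposition~\ref{prop4} applies and yields $\lambda^+<\lambda^*$.

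\emph{Hypothesis (II).} I would first fix the sign of $y$ near $a$. Since $w>0$ only on $(c,d)$, we have $w\le 0$ on $(a,c)$; because $\lambda^*>\mu$, multiplication by the nonpositive $w$ reverses the inequality, so $\lambda^* w+q\le \mu w+q\le 0$ on $(a,c)$ by (\ref{eq31}). Hence the equation for $y$ is disconjugate on $[a,c]$, and the solution with $y(a)=0$, $y'(a)=1$ stays convex and increasing while positive, so it cannot vanish again before $c$; normalizing, $y(c)>0$. On $(c,d)$ put $m^2:=\inf_{(c,d)}(\lambda^* w+q)=\bigl(\inf_{(c,d)}\sqrt{\lambda^* w+q}\bigr)^2$, which by (\ref{eq35}) satisfies $m(d-c)>\pi$, so $c+\pi/m<d$. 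Comparing $y$ with the constant-coefficient solution $v(x)=\sin\bigl(m(x-c)\bigr)$ of $v''+m^2v=0$ via the Wronskian identity $(y'v-v'y)'=\bigl(m^2-(\lambda^* w+q)\bigr)yv$, which is $\le 0$ whenever $y>0$, and integrating over $[c,c+\pi/m]$ produces opposite signs at the two endpoints (a contradiction). Thus $y$ must vanish in $(c,c+\pi/m]\subset(c,d)$, giving the required interior zero.

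\emph{Hypothesis (I).} Off $(c,d)$ the sign conditions do the work: by (\ref{eq31}) and (\ref{eq33}), $\mu w+q\le 0$ on $(a,c)$ and on $(d,e)$, so any positive solution there satisfies $u''=-(\mu w+q)u\ge 0$, i.e.\ it is convex and non-oscillatory. On $(c,d)$, where $\mu w+q\ge 0$ by (\ref{eq32}), I would use the scaled Pr\"ufer angle $\theta$ given by $u=\rho\sin\theta$, $u'=\sqrt{M}\,\rho\cos\theta$ with $M:=\sup_{(c,d)}(\mu w+q)=\bigl(\sup_{(c,d)}\sqrt{\mu w+q}\bigr)^2$. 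A direct computation gives $\theta'=\sqrt{M}\cos^2\theta+(\mu w+q)M^{-1/2}\sin^2\theta\le\sqrt{M}$, so by (\ref{eq34x}) the angle advances by at most $\sqrt{M}\,(d-c)\le\pi/2$ across $(c,d)$. Since zeros of $u$ occur exactly at $\theta\equiv 0\pmod\pi$ and $\theta$ is nondecreasing there, a solution entering $(c,d)$ with $\theta(c)\in(0,\pi/2)$ remains below $\pi$, hence positive, on $[c,d]$. Starting from the convex branch on $[a,c]$ (which delivers $u(c)>0$ and $u'(c)\ge 0$, so $\theta(c)\in(0,\pi/2]$) and extending convexly on $[d,e]$, one attempts to assemble a single solution positive throughout $[a,e]$.

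\emph{Main obstacle.} The crux is precisely the patching on $(d,e)$. The rotation bound guarantees only $\theta(d)<\pi$, not $\theta(d)\le\pi/2$, so $u$ may enter $(d,e)$ already decreasing ($u'(d)<0$); a convex positive function that is decreasing can still reach zero, and near $\theta=\pi$ one has $\theta'\approx\sqrt{M}>0$, which drives $\theta$ toward $\pi$. The delicate point is therefore to verify that $\theta(d)$ lands below the unstable equilibrium $\pi-\arctan\sqrt{M/|\mu w+q|}$ of the $(d,e)$ flow, so that $\theta$ is repelled back toward its stable value in $(0,\pi/2)$ rather than captured at $\pi$; equivalently, that $u$ does not turn over and vanish before $e$. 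I would concentrate the careful estimates here, selecting the initial data at $a$ so that the accumulated Pr\"ufer angle never reaches $\pi$ on $[a,e]$, thereby securing the disconjugacy on $[a,e]$ and hence the positive solution that Proposition~\ref{prop4} requires.
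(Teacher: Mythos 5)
Your reduction to Proposition~\ref{prop4} is the right strategy and is the same one the paper follows, and your verification of hypothesis (II) --- forcing a zero of $y$ in $(c,d)$ from \eqref{eq35} by comparison with $\sin\bigl(m(x-c)\bigr)$ --- is complete and is equivalent to the paper's comparison with $v''+\pi^2(d-c)^{-2}v=0$, $v(c)=v(d)=0$. The problem is hypothesis (I). You set up the convexity argument off $(c,d)$ and the Pr\"ufer rotation bound $\sqrt{M}\,(d-c)\le\pi/2$ on $(c,d)$, and then you explicitly declare the patching at $x=d$ to be an unresolved ``main obstacle'' that you ``would concentrate the careful estimates'' on. That is not a proof: without that step you have not produced a solution of \eqref{mu} positive on all of $[a,e]$, which is exactly the hypothesis Proposition~\ref{prop4} needs. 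As written, the proposal is incomplete at its decisive point.

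For comparison, the paper disposes of $(d,e)$ in one sentence: since $\mu w+q\le 0$ there the equation is disconjugate on $(d,e)$, hence the solution $u$, already positive on $[a,d]$, ``must also be positive in $(d,e)$.'' You should know that the difficulty you isolated is genuine and is not obviously answered by that sentence: disconjugacy forbids \emph{two} zeros in $(d,e)$, not the single zero that a positive, convex, decreasing solution can acquire after entering $(d,e)$ with $u'(d)<0$, and the rotation bound only yields $\theta(d)<\pi$, not $\theta(d)\le\pi/2$. Indeed, take $\mu w+q\equiv 0$ on $(a,c)$ and on $(d,e)$ (permitted by \eqref{eq31} and \eqref{eq33}) and $\mu w+q\equiv M$ on $(c,d)$ with $\sqrt{M}\,(d-c)=\pi/2$ (permitted by \eqref{eq32} and \eqref{eq34x}): the solution with $u(a)=0$, $u'(a)=1$ reaches $d$ with $u(d)=1/\sqrt{M}$ and $u'(d)=-(c-a)\sqrt{M}<0$, then runs linearly to a zero at $d+1/\bigl((c-a)M\bigr)$, so if $e-d$ exceeds this length no solution is positive on $[a,e]$ by Sturm separation. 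So the step you left open cannot be closed from \eqref{eq31}--\eqref{eq34x} alone; it needs either an extra hypothesis (for instance one tying $e-d$, or $\inf_{(d,e)}|\mu w+q|$, to the data on $[a,d]$) or a different route to the hypothesis of Proposition~\ref{prop4}. Your instinct to flag the obstacle was sound, but flagging it is not the same as overcoming it, and the burden of the proposition lies precisely there.
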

\begin{proof}
First we note that an application of Sturm's comparison theorem to \eqref{mu} and use of \eqref{eq31} implies the existence of a solution $u>0$ in $[a,c]$. On $(c,d)$, we compare the two equations,
\begin{eqnarray}
\label{eqn1}
u^{\prime\prime}+(\mu w+q)u &=& 0,\\
\label{eqn2}
v^{\prime\prime}+\frac{\pi^2}{4(d-c)^2}v &=& 0,\;v(c)=0,\;v(d)=1.
\end{eqnarray}
Since \eqref{eqn2} has the solution $v(x)=\sin(\frac{\pi(x-c)}{2(d-c)})$, it is  clear that $v(x)\neq 0$ in $(c,d]$ (so that \eqref{eqn2} is disconjugate by Sturm-Liouville theory). Since $\mu w+q\leq \frac{\pi^2}{4(d-c)^2}$, by \eqref{eq34x}, we have that the solution $u$ found earlier on $(a,c)$ must also be positive on $(c,d]$ (else the Sturm comparison theorem would imply that $v$ would have to vanish somewhere in $(c,d]$, which is impossible. Next, in the interval $(d,e)$ we know that $\mu w+q\leq 0$, and so the equation is disconjugate once again and so our solution $u$, now positive in both $[a,c]$ and $(c, d]$, must also be positive in $(d,e)$. Therefore, we have established the fact that there is a solution of \eqref{mu} that is positive on $[a,e]$.
Now \eqref{eq35} implies that
\begin{equation}
\label{ineq1}
\lambda^{*}w+q>\frac{\pi^2}{(d-c)^2}.
\end{equation}
Comparing the two equations
\begin{eqnarray}
\label{eqn3}
y^{\prime\prime}+(\lambda^{*} w+q)y &=& 0,\\
\label{eqn4}
v^{\prime\prime}+\frac{\pi^2}{(d-c)^2}v &=& 0, \quad v(c)=v(d)=0,
\end{eqnarray}
we see that \eqref{eqn4} has the solution $v(x)=\sin(\frac{\pi(x-c)}{d-c})$. However, \eqref{ineq1} implies that $y$ has at least one zero in $(c, d)$, by Sturm's comparison theorem. Therefore, Proposition~\ref{prop4} applies  and so $\lambda^+< \lambda^{*}$.
\end{proof}

\section{Application}

Now consider the particular case
\begin{equation}
\label{indef4}
u^{\prime\prime}+(\lambda w(x)+q(x)) u = 0,\quad u(-1)=u(2)=0,
\end{equation}
where $w(x)$ is the piecewise constant weight function defined by $w(x)=-1, x\in [-1,0)$, $w(x) = 2, x\in [0,1)$ and $w(x)=-1, x\in (1,2]$, and $q$ is an arbitrary piecewise continuous function over $[-1,2]$. This is equivalent to the linked problem
\begin{eqnarray*}
u^{\prime\prime}+(-\lambda+q(x))u &=& 0,\; in\;(-1,0)\nonumber\\
u^{\prime\prime}+(2\lambda+q(x))u &=& 0,\;in\;(0,1)\nonumber\\
u^{\prime\prime}+(-\lambda+q(x))u &=& 0,\;in\;(1,2),\nonumber\\
\end{eqnarray*}
since the same parameter $\lambda$ appears in each of the three equations. 

\begin{proposition}
For some $M$, with $M>\frac{\pi^2}{20}$, we assume that
\begin{eqnarray}
q(x)\leq M\;in\;(-1,0), \label{Mc1}\\
-M\leq q(x)\leq M,\;in\;(0,\frac{\pi}{2\sqrt{5M}}),\label{Mc2}\\
q(x)\leq M\;in\;(1,1+\frac{\pi}{2\sqrt{5M}}).\label{Mc3}
\end{eqnarray}
Then for \eqref{indef4}, we have $$\lambda^+<\frac{21M}{2}.$$
\end{proposition}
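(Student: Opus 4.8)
The plan is to deduce the estimate from Proposition~\ref{prop4} (or, what amounts to the same thing, the localized form of Proposition~\ref{prop5}), applied to \eqref{indef4} with $a=-1$, $c=0$, $d=1$, $b=2$, the trial value $\lambda^{*}=\tfrac{21M}{2}$, and the abbreviation $\delta=\tfrac{\pi}{2\sqrt{5M}}$. The standing hypothesis $M>\tfrac{\pi^{2}}{20}$ is precisely the statement $\delta<1$, which places $(0,\delta)\subset(0,1)$ and $(1,1+\delta)\subset(1,2)$; this is why \eqref{Mc2} and \eqref{Mc3} are posed on these subintervals. First I would record the behaviour of the solution $y$ of $y''+(\lambda^{*}w+q)y=0$ with $y(-1)=0$, $y'(-1)=1$: on $(-1,0)$ one has $\lambda^{*}w+q=-\tfrac{21M}{2}+q\le-\tfrac{19M}{2}<0$ by \eqref{Mc1}, so $y$ is convex while positive and therefore increases across $(-1,0]$, reaching $x=0$ with $y(0)>0$ and $y'(0)>0$.

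Next I would force $y$ to vanish inside $(c,d)=(0,1)$, using only the local control \eqref{Mc2}. On $(0,\delta)$ the weight is $w=2$, so $\lambda^{*}w+q=21M+q\ge20M$, and the choice of $\delta$ gives $\delta\sqrt{20M}=\pi$. Comparing with $\psi(x)=\sin(\sqrt{20M}\,x)$, whose consecutive zeros are $0$ and $\delta$, a Wronskian computation with $W=y'\psi-y\psi'$ (nonincreasing because $\lambda^{*}w+q\ge20M$) and the entry data $y(0)>0$, $y'(0)>0$ shows that $y$ must have a zero in $(0,\delta)\subset(0,1)$; this is the localized version of \eqref{eq35}. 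In the same spirit I would build the competing positive solution $u$ of $u''+(\mu w+q)u=0$ on the \emph{controlled} block $[-1,\delta]$ by taking a generally positive $\mu$ with $M\le\mu<2M$ (so $\mu<\lambda^{*}$): on $(-1,0)$ the inequality $\mu\ge M\ge q$ yields $\mu w+q\le0$ and hence a positive $u$, while on $(0,\delta)$ the inequality $\mu<2M$ gives $\mu w+q=2\mu+q<5M=\tfrac{\pi^{2}}{4\delta^{2}}$, so comparison with $\cos(\tfrac{\pi x}{2\delta})$ keeps $u>0$ on $[0,\delta]$, exactly as in the proof of Proposition~\ref{prop5} and its condition \eqref{eq34x}.

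The step I expect to be the real obstacle is propagating positivity of the competing solution past the controlled block, that is, across the middle portion $(\delta,1)$ and the outer portion $(1+\delta,2)$, on which $q$ is only assumed piecewise continuous. Over $(\delta,1)$ there is a clean remedy: since $w=2>0$ there, any $\mu<\lambda^{*}$ makes $\mu w+q$ pointwise smaller than $\lambda^{*}w+q$, so between two consecutive zeros of $y$ the $u$-equation is the less oscillatory one and is disconjugate, furnishing a positive solution on each such subinterval. This is exactly the per-subinterval mechanism of Proposition~\ref{AJM}, and it absorbs the unconstrained behaviour of $q$ on $(\delta,1)$ at no cost. The genuinely delicate region is the outer block $(1+\delta,2)$, where $w=-1<0$: there $\mu<\lambda^{*}$ makes the $u$-equation \emph{more} oscillatory than the $y$-equation, so the per-subinterval trick runs backwards, and one can no longer manufacture a positive $u$ between consecutive zeros of $y$ unless $y$ has no zeros there at all.

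I would therefore concentrate the remaining effort on showing that $y$ is non-oscillatory on the whole of $(1,2)$, equivalently that $\lambda^{*}w+q=-\tfrac{21M}{2}+q<0$ there. On $(1,1+\delta)$ this is immediate from \eqref{Mc3}; on $(1+\delta,2)$ it amounts to $q<\tfrac{21M}{2}$, which is where the argument hinges --- either on an upper bound for $q$ on $(1+\delta,2)$ of the same kind as \eqref{Mc1} and \eqref{Mc3} (the natural reading being $q\le M$ on all of $(1,2)$, whence $-\tfrac{21M}{2}+q\le-\tfrac{19M}{2}<0$), or on a separate disconjugacy estimate exploiting that piecewise continuity already bounds $q$ on the compact block $[1+\delta,2]$. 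Once $y$ carries all its zeros inside $(0,1)$, every subinterval between consecutive zeros meets only the region where $w>0$ (together with the two controlled end blocks), the competing positive solutions exist on each, and Proposition~\ref{prop4} (through Proposition~\ref{AJM}) delivers $\int_{-1}^{2}w\,y^{2}\,dx>0$ and hence $\lambda^{+}<\lambda^{*}=\tfrac{21M}{2}$.
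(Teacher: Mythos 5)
Your treatment of the controlled blocks is correct, and on $(\delta,1)$ you have in fact supplied a step the paper's own proof omits: since $w=2>0$ there, any $\mu_j<\lambda^*$ makes the comparison equation strictly less oscillatory than the $y$-equation on a nodal interval of $y$, which is exactly what Proposition~\ref{AJM} requires. The paper instead invokes Proposition~\ref{prop5} wholesale with $\mu=2M$, $c=0$, $d=\tfrac{\pi}{2\sqrt{5M}}$, $e=1+\tfrac{\pi}{2\sqrt{5M}}$, and verifies only \eqref{eq34x} and \eqref{eq35} (your computations for these agree with the paper's); condition \eqref{eq33} would require $4M+q\le 0$ on $(\delta,1)$, which is not among the hypotheses, so your per-subinterval repair of that stretch is a genuine improvement rather than a detour.

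The proof as submitted nevertheless does not close, and the place where it fails is precisely the place you flag: the outer block $(1+\delta,2)$. Neither of your two escape routes is available from the stated hypotheses. There is no bound on $q$ there (the paper's closing remark even advertises this), so you cannot assert $\lambda^*w+q<0$ on $(1+\delta,2)$; and piecewise continuity only bounds $q$ by some constant $K$ that need not lie below $\tfrac{21M}{2}$, so the disconjugacy route also fails: if $y$ has consecutive zeros $x_j<x_{j+1}$ inside $(1+\delta,2)$, then for every $\mu_j<\lambda^*$ one has $\mu_jw+q=-\mu_j+q>-\lambda^*+q$ there, hence by Sturm comparison every solution of the $\mu_j$-equation vanishes in $(x_j,x_{j+1})$ and Proposition~\ref{AJM} cannot be fed. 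A second, smaller omission: even when $y$ has no zero in $(1,2)$, the final nodal interval $[x_{k-1},2]$ straddles the turning point at $x=1$ and meets both the uncontrolled stretch $(\delta,1)$ (where one wants $\mu_j$ small) and the negative-weight stretch (where one wants $\mu_j$ large), so no single pointwise comparison produces the required positive solution; it must be constructed directly. To be fair, both difficulties are inherited from the source: the assertion in the proof of Proposition~\ref{prop4} that one ``can always find a value of $\mu>0$'' making \eqref{mu} disconjugate on $[e,b]$ ignores the constraint $\mu<\lambda^*$ demanded by Proposition~\ref{AJM}. Your diagnosis of where the argument hinges is therefore accurate, but as written your proof establishes the proposition only under an additional hypothesis such as $q\le M$ on all of $(1,2)$, together with an explicit construction on the last straddling nodal interval.
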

\begin{proof}
The results follow from Proposition ~\ref{prop5}, with the choice $\mu=2M$. We consider, $a=-1, c=0, d=\frac{\pi}{2\sqrt{5M}}$, $e=1+\frac{\pi}{2\sqrt{5M}}$, $b=2$. Thus,
\begin{eqnarray*}
(d-c)\sup_{(c,d)} \sqrt{2\mu+q(x)}&=&\frac{\pi}{2\sqrt{5M}}\sup_{(0,\frac{\pi}{2\sqrt{5M}})} \sqrt{2\mu+q(x)}\\
                             &\leq& \frac{\pi\sqrt{5M}}{2\sqrt{5M}}=\frac{\pi}{2},
\end{eqnarray*}
\begin{eqnarray*}
(d-c)\inf_{(c,d)}\sqrt{2\lambda^{*}+q(x)}&=&\frac{\pi}{2\sqrt{5M}}\inf_{(0,\frac{\pi}{2\sqrt{5M}})} \sqrt{2\lambda^{*}+q(x)}\\
                                     &\geq & \frac{\pi\sqrt{2\lambda^{*}-M}}{2\sqrt{5M}},\\
                                     &>& \frac{\pi\sqrt{20M}}{2\sqrt{5M}}\quad (\text{if}\, \lambda^* > 21M/2)\\
                                     &=& \pi.
\end{eqnarray*}
\end{proof}

\begin{remark} Note that there are no bounds on $q(x)$ in either of the intervals $(\frac{\pi}{2\sqrt{5M}}, 1)$ or ${(1+\frac{\pi}{2\sqrt{5M}}, 2)}$.
\end{remark}

\section{Acknowledgments}

\end{document}